\newtheorem{thm}{Theorem}[section]
\newtheorem{defn}{Definition}[section]
\newtheorem{lem}{Lemma}[section]
\newtheorem{rem}{Remark}[section]
\title
{$L^p$-norm estimate for the Bergman projection on  Hartogs triangle}
\author{\normalsize Tomasz Beberok \\
\small Faculty of Mathematics and Computer Science, Jagiellonian University,\\
\small Lojasiewicza 6, 30-048 Krakow, Poland \\}
\date{}
\begin{document}

\begin{center}
  \textbf{Markov's inequality on Koornwinder's domain in $L^p$ norms
}
\end{center}
\vskip1em
\begin{center}
  Tomasz Beberok
\end{center}

\vskip2em

\noindent \textbf{Abstract.} Let $\Omega=\{(x,y) \in \mathbb{R}^2 \colon |x|<y+1, \, x^2>4y\}$. We prove that the optimal exponent in Markov's inequality on $\Omega$ in $L^p$ norms is 4.
\vskip1em

\noindent \textbf{Keywords:}  Markov inequality; $L^p$ norms; Markov exponent
\vskip1em
\noindent \textbf{AMS Subject Classifications:} primary 41A17, secondary 41A44\\

\section{Introduction}
\label{}
Throughout this paper $\mathcal{P}(\mathbb{R}^N)$ ($\mathcal{P}_n(\mathbb{R}^N)$, respectively) denotes the set of algebraic polynomials of $N$ variables with real coefficients (with total degree at most $n$). We begin with the definition of multivariate Markov's inequality.
\begin{defn}
Let $E \subset \mathbb{R}^N$ be a compact set. We say that $E$ admit Markov's inequality if there exist constants $M,r >0$ such that for every polynomial $P \in \mathcal{P}(\mathbb{R}^N)$ and $j \in \{1,2,\ldots,N\}$
 \begin{align}\label{Markov}
    \left\| \frac{\partial P}{\partial x_j} \right\|_E \leq M (\deg P)^r \|P\|_E
 \end{align}
where $\|\cdot\|_E$ is the supremum norm on $E$.
\end{defn}
A compact set $E$ with this property is called a Markov set. The inequality (\ref{Markov}) is a generalization of the classical inequality proved by A. A. Markov in 1889, which gives such estimate on $[-1,1]$. The theory of Markov inequality and it's generalizations is still the active and fruitful area of approximation theory (see, for instance, \cite{LCK,B,KN}). For a given compact set $E$, an important problem is to determine the minimal constant $r$ in (\ref{Markov}). This can be used to minimize the loss of regularity in problems concerning the linear extension of classes of $\mathrm{C}^{\infty}$ functions with restricted growth of derivatives (see \cite{PP,P1}). Such an $r$ is so-called Markov's exponent of $E$ (see \cite{BP} for more detail on this matter). In the case of supremum norm various information about Markov's exponent is known (see, e.g., \cite{DL,G1,MM,P2,RS}). The Markov type inequalities were also studied in $L^p$ norms (see \cite{MB,BE,G11,G2,G3,HST}). In this case the question of Markov's exponent problem is much more complex. In particular,  to the best of our knowledge, there is no example of a compact set in $\mathbb{R}^N$ with cusps for which Markov's exponent (with respect to the Lebesgue measure) is known. The attempts to solve this problem led, among others, to a so-called Milówka--Ozorka identity (see \cite{BKMO,M1,M2} for discussion). The aim of this note is to give such an example. More precisely, we show that, in the notation above,  Markov's exponent of $\Omega$ in $L^p$ norms is 4. Here $\Omega=\{(x,y) \in \mathbb{R}^2 \colon |x|<y+1, \, x^2>4y\}$ which is depicted in the Figure \ref{Koorn}. Since $\Omega$ is the region for Koornwinder orthogonal polynomials (first type), see \cite{Koo1,Koo2}, we call this set  Koornwinder's domain.

\begin{figure}[ht]\label{Koorn}
\centering
\includegraphics[scale=0.5]{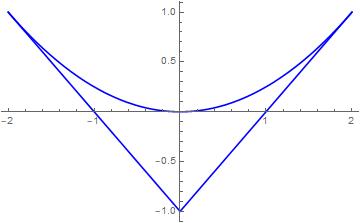}
\caption{Region for Koornwinder orthogonal polynomials.}
\end{figure}

\section{Some weighted polynomial inequalities on simplex}
The following lemma will be particularly useful in the proof of our main result.
\begin{lem}\label{WMI}
  Let $S=\{(x_1,x_2) \in \mathbb{R}^2 \colon -1<x_1<x_2<1\}$, $w(x_1,x_2)=x_2-x_1$ and $1 \leq p \leq \infty$. Then there exists a positive constant $C(S,w)$ such that, for every $P \in \mathcal{P}_n(\mathbb{R}^2)$, we have
     \begin{align}\label{wMI}
       \left\|\frac{\partial P}{\partial x_i}\right\|_{L^p(S,w)} \leq C n^2 \|P\|_{L^p(S,w)} \quad (i=1,2).
     \end{align}
\end{lem}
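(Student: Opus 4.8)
The plan is to reduce the two–dimensional inequality to the one–dimensional $L^p$–Markov inequality for Jacobi weights on an interval, which has exponent $2$, by slicing the triangle into segments; the new ingredient will be a careful choice of the direction of the slices in a neighbourhood of each vertex.

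First I would normalise: an affine change of variables turns $S$ into the standard simplex $T=\{(u,v):u>0,\ v>0,\ u+v<1\}$ and turns $w$ into a constant multiple of the barycentric coordinate $u$ (since $w$ vanishes to first order exactly on the image of the edge $\{x_1=x_2\}$). As $\partial/\partial x_1$ and $\partial/\partial x_2$ are then constant–coefficient combinations of $\partial/\partial u$ and $\partial/\partial v$, it is enough to bound $\|\partial P/\partial u\|_{L^p(T,u)}$ and $\|\partial P/\partial v\|_{L^p(T,u)}$ by $Cn^2\|P\|_{L^p(T,u)}$. The one–variable fact I would invoke is classical: for a Jacobi weight $t^{a}(1-t)^{b}$ with $a\in\{0,1\}$, $b=0$ on $(0,1)$ and $Q\in\mathcal{P}_n(\mathbb{R})$, $\|Q'\|_{L^p}\le Cn^2\|Q\|_{L^p}$; by rescaling, on an interval of length $L$ the constant becomes $Cn^2L^{-1}$.

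For the bulk of the triangle I would foliate by horizontal segments to handle $\partial/\partial u$: by Fubini,
\[
  \Big\|\frac{\partial P}{\partial u}\Big\|_{L^p(T,u)}^{p}=\int_{0}^{1}\Big(\int_{0}^{1-v}\big|\tfrac{\partial P}{\partial u}(u,v)\big|^{p}\,u\,du\Big)\,dv ,
\]
and on the segment $\{v=\mathrm{const}\}$, of length $1-v$ and carrying the weight $u$, apply the one–dimensional estimate. Since $1-v$ is bounded below except as $(u,v)\to(0,1)$, this already gives the inequality away from a fixed small neighbourhood $B$ of the vertex $(0,1)$; $\partial/\partial v$ is handled symmetrically by vertical segments, the only troublesome vertex being $(1,0)$.

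Near $(0,1)$ the horizontal slices become short and the factor $L^{-1}$ blows up — indeed the straightforward bound carries a non–integrable factor $(1-v)^{-1}$ for large $p$ — and this is the main obstacle. I would get around it by never differentiating along the short direction: write $\partial/\partial u=(\partial/\partial u-\partial/\partial v)+\partial/\partial v$ and estimate the two terms by foliating $B$ by the \emph{full} $T$–segments parallel to the edge $\{u+v=1\}$ (the change of variables $(u,v)\mapsto(u,u+v)$) and by the \emph{full} vertical $T$–segments; near $(0,1)$ both families consist of segments of length bounded below, on each of which the weight $u$ is, after rescaling, one of the Jacobi weights $t$ or $1$, so the one–dimensional inequality applies with a uniform constant and integrating over the foliation recovers $\|\cdot\|_{L^p(T,u)}$. (This works because $T$ is convex and the weight restricts to an admissible Jacobi weight on every relevant chord.) The vertex $(1,0)$ is dealt with the same way using $\partial/\partial v=(\partial/\partial v-\partial/\partial u)+\partial/\partial u$, and adding the bulk estimate to the two vertex estimates — the $L^p$ norms add with bounded overlap — finishes the argument.
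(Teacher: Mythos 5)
Your argument for $1\le p<\infty$ is correct and rests on the same two pillars as the paper's proof: Goetgheluck's one--dimensional weighted $L^p$ Markov inequality for the Jacobi weight $t$ (reference \cite{G0}), and the trick of decomposing a partial derivative as a combination of derivatives along slicing directions in which every chord of the triangle has length bounded below and carries an admissible weight. The difference is only in the bookkeeping of the decomposition: the paper splits $S$ into a bulk triangle $D_0$ on which $w$ is bounded below (and invokes the known unweighted multivariate $L^p$ Markov inequality for convex sets there), plus two parallelogram strips $D_1,D_2$ hugging the degenerate edge, each sheared onto a unit square so that all slices automatically have unit length; you instead slice the whole simplex and isolate neighbourhoods of the two vertices where the naive horizontal/vertical slices degenerate, repairing them with the anti--diagonal foliation. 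Your version is arguably more self-contained (no appeal to the two--dimensional unweighted result on the bulk), at the cost of tracking the $L^{-1}$ factor explicitly. The one genuine loose end is $p=\infty$: the lemma is stated for $1\le p\le\infty$, your Fubini-based slicing does not literally apply there, and you would need the one--dimensional \emph{sup-norm} analogue $\|tQ'(t)\|_{[0,1]}\le Cn^2\|tQ(t)\|_{[0,1]}$ (true, but a different statement from the $L^p$ one you quote). The paper treats $p=\infty$ by a separate argument, applying Wilhelmsen's Markov inequality for convex bodies to the polynomial $wP$ and then removing the resulting zero-order term via Lemma 3 of \cite{G2}; you should either do the same or supply the weighted sup-norm inequality on each slice.
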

\begin{proof}
We start with  $p=\infty$. Since $\overline{S}$ is a convex body in $\mathbb{R}^2$, the result of Wilhelmsen \cite{W} gives
   \begin{align*}
     \max\left\{\left\|w\frac{\partial P}{\partial x_1}-P\right\|_{L^{\infty}(S)},  \left\|w\frac{\partial P}{\partial x_2}+P\right\|_{L^{\infty}(S)} \right\} \leq \frac{2(n+1)^2}{r_{S}} \|wP\|_{L^{\infty}(S)},
   \end{align*}
where $\delta_S$ is the width of the convex body (the minimal distance between parallel supporting hyperplanes). Therefore by Lemma 3 from \cite{G2}, there is a constant $\kappa > 0$ such that, for all $P \in \mathcal{P}_n(\mathbb{R}^2)$,
   \begin{align}\label{infty}
     \left\|w\frac{\partial P}{\partial x_i}\right\|_{L^{\infty}(S)} \leq \frac{2(\kappa \delta_{S}+1)(n+1)^2}{\delta_{S}} {\|P\|}_{L^{\infty}(S,w)} \quad (i=1,2).
   \end{align}
Thus we conclude that (\ref{wMI}) holds when $p=\infty$. Now, for each  $1 \leq p < \infty$, it is clear that
    \begin{align*}
      \left\|\frac{\partial P}{\partial x_i}\right\|_{L^p(S,w)} \leq \sum_{j=0}^{2} \left( \int_{D_j} \left| \frac{\partial P}{\partial x_i} (x_1,x_2) \right|^p (x_2-x_1) \, dx_1dx_2 \right)^{1/p}
    \end{align*}
where
   \begin{align*}
     &D_0=\{(x_1,x_2) \in \mathbb{R}^2 \colon -1<x_1<0, \, x_1+1<x_2<1\}, \\ &D_1=\{(x_1,x_2) \in \mathbb{R}^2 \colon -1<x_1<0, \, x_1<x_2<x_1+1\}, \\ &D_2=\{(x_1,x_2) \in \mathbb{R}^2 \colon 0<x_2<1, \, x_2-1<x_1<x_2\}.
   \end{align*}
We shall show that there is a constant $\tilde{C} > 0$ such that, for all $P \in \mathcal{P}(\mathbb{R}^2)$,
   \begin{align}\label{parts}
      \left\|\frac{\partial P}{\partial x_i}\right\|_{L^p(D_j,w)} \leq \tilde{C} (\deg P)^{2} {\|P\|}_{L^p(S,w)}, \quad j=0,1,2.
   \end{align}
Since $D_0$ is a bounded convex set and $w$ is bounded away from zero on $D_0$, we have (see \cite{D,G11,G2,Kroo})
    \begin{align*}
      \left( \int_{D_0} \left| \frac{\partial P}{\partial x_i} (x_1,x_2) \right|^p (x_2-x_1) \, dx_1dx_2 \right)^{1/p} &\leq  C_0 (\deg P)^{2} \|P\|_{L^p(D_0,w)}\\ &\leq C_0 (\deg P)^{2} \|P\|_{L^p(S,w)}.
   \end{align*}
Now consider the case $j=1$. The integral is then
    \begin{align*}
      \left( \int_{D_1} \left| \frac{\partial P}{\partial x_i} (x_1,x_2) \right|^p (x_2-x_1) \, dx_1dx_2 \right)^{1/p}.
    \end{align*}
We perform the change of variables $t=x_1$, $s=x_2-x_1$. The integral becomes
     \begin{align*}
      \left( \int_{-1}^{0} \int_{0}^{1} \left| \frac{\partial P}{\partial x_i} (t,s+t) \right|^p s \, dsdt \right)^{1/p}.
    \end{align*}
Define $Q(t,s)=P(t,s+t)$. Then
    \begin{align*}
    \frac{\partial Q}{\partial t} (t,s) - \frac{\partial Q}{\partial s} (t,s) = \frac{\partial P}{\partial x_1} (t,s+t), \quad \frac{\partial Q}{\partial s} (t,s) = \frac{\partial P}{\partial x_2} (t,s+t).
    \end{align*}
Hence, (using Goetgheluck’s result--see \cite{G0})
    \begin{align*}
      \int_{0}^{1} \left| \frac{\partial P}{\partial x_2} (t,s+t) \right|^p s \, ds \leq C_1^p (\deg Q)^{2p} \int_{0}^{1} \left| Q(t,s) \right|^p s \, ds.
    \end{align*}
Therefore
    \begin{align*}
       \left\|\frac{\partial P}{\partial x_2}\right\|_{L^p(D_1,w)}  & \leq \left( \int_{-1}^{0} \left[ C_1^p (\deg P)^{2p}  \int_{0}^{1} \left| Q(t,s) \right|^p s \, ds \right] dt
       \right)^{1/p} \\&= C_1 (\deg P)^{2}  \left( \int_{-1}^{0}   \int_{0}^{1} \left| P(t,s+t) \right|^p s \, ds dt\right)^{1/p} \\ &\leq C_1 (\deg P)^{2} \|P\|_{L^p(S,w)}.
    \end{align*}
On the other hand,
   \begin{align*}
     \left\|\frac{\partial P}{\partial x_1}\right\|_{L^p(D_1,w)} \leq & \left( \int_{-1}^{0} \int_{0}^{1} \left| \frac{\partial Q}{\partial t} (t,s) \right|^p s \, dsdt \right)^{1/p} \\ &+ \left( \int_{-1}^{0} \int_{0}^{1} \left| \frac{\partial Q}{\partial s} (t,s) \right|^p s \, dsdt \right)^{1/p}.
   \end{align*}
We have, arguing as before, that there exists constants $\hat{C}_1, C_1$ such that for every polynomial $Q \in \mathcal{P}(\mathbb{R}^2)$
   \begin{align*}
      &  \int_{-1}^{0} \left| \frac{\partial Q}{\partial t} (t,s) \right|^p  \, dt \leq \hat{C}_1^p (\deg Q)^{2p} \int_{-1}^{0} \left| Q(t,s) \right|^p  \, dt,\\
      &  \int_{0}^{1} \left| \frac{\partial Q}{\partial s} (t,s) \right|^p s \, ds \leq C_1^p (\deg Q)^{2p} \int_{0}^{1} \left| Q(t,s) \right|^p s \, ds .
   \end{align*}
Therefore we see immediately that
   \begin{align*}
    \left\|\frac{\partial P}{\partial x_1}\right\|_{L^p(D_1,w)}   \leq & \left( \int_{0}^{1} \left[ \hat{C}_1^p (\deg P)^{2p} s \int_{-1}^{0} \left| Q(t,s) \right|^p  \, dt \right] ds \right)^{1/p} \\ &+ \left( \int_{-1}^{0} \left[ C_1^p (\deg P)^{2p}  \int_{0}^{1} \left| Q(t,s) \right|^p s \, ds \right] dt\right)^{1/p}.
   \end{align*}
Thus we finally have
   \begin{align*}
     \left\|\frac{\partial P}{\partial x_1}\right\|_{L^p(D_1,w)}  &\leq \hat{C}_1 (\deg P)^{2} \|P\|_{L^p(D_1,w)} + C_1 (\deg P)^{2} \|P\|_{L^p(D_1,w)} \\ &\leq (\hat{C}_1  + C_1) (\deg P)^{2} \|P\|_{L^p(S,w)}.
   \end{align*}
A similar result for $D_2$ obtains if one considers the substitution $t=x_2$, $s=x_2-x_1$ and polynomial $\tilde{Q}(t,s)=P(t-s,t)$. We omit the details. Thus we have shown that, if $\tilde{C}=2 \max\{C_0,\hat{C}_1,C_1,\hat{C}_2,C_2 \}$, then (\ref{parts}) holds. That completes the proof.
\end{proof}
Now we shall prove the following weighted Schur-type inequality.
\begin{lem}\label{WSchur}
  (with previous notation). Let $d$ be a natural number. Then, for every $A \subset \bar{S}$ and $R \in \mathcal{P}_k(\mathbb{R}^2)$, satisfying the condition
   \begin{align*}
     \{ \text{there exists }\, \alpha \in \mathbb{N}^2 \, \text{ such that } \, \alpha_1 + \alpha_2 \leq d \, \text{ and } \, |R^{(\alpha)}(x)| \geq m > 0 \,\, (x \in A) \}
   \end{align*}
  one can find a constant $C_d$ such that, for any $\epsilon >0$ and every $P \in \mathcal{P}_n(\mathbb{R}^2)$, we have
    \begin{align}\label{wSchur}
      \|P\|_{L^p(A,w)} \leq C_d m^{-1} \epsilon^{-1} (n+k)^{2d} \|PR\|_{L^p(S,w)} + \epsilon \|P\|_{L^p(S,w)}.
    \end{align}
\end{lem}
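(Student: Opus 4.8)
The plan is as follows. We may assume $0<\epsilon\le 1$, since for $\epsilon>1$ the inequality is immediate from $\|P\|_{L^p(A,w)}\le\|P\|_{L^p(S,w)}$ (valid because $A\subseteq\bar S$). We may also assume $|\alpha|=d$ exactly: if $|\alpha|<d$, the estimate proved for the value $|\alpha|$ in place of $d$ already implies the one for $d$, as $(n+k)^{2|\alpha|}\le(n+k)^{2d}$. The mechanism is most transparent when $d=1$. If $|R^{(\alpha)}|\ge m$ on $A$ with $\alpha=e_i$, split $A$ at a level $\delta$: on $\{x\in A:|R(x)|\ge\delta\}$ one has $|P|\le\delta^{-1}|PR|$ pointwise, so this part contributes $\delta^{-1}\|PR\|_{L^p(S,w)}$; on $\{x\in A:|R(x)|<\delta\}$ Leibniz gives $PR^{(\alpha)}=\partial_{x_i}(PR)-(\partial_{x_i}P)R$, hence $|P|\le m^{-1}\big(|\partial_{x_i}(PR)|+\delta\,|\partial_{x_i}P|\big)$ there, and Lemma~\ref{WMI} turns this into $m^{-1}C(n+k)^{2}\|PR\|_{L^p(S,w)}+m^{-1}\delta C n^{2}\|P\|_{L^p(S,w)}$. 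Choosing $\delta\asymp m\epsilon n^{-2}$ balances everything and yields the case $d=1$.

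For general $d$ one iterates this by controlling, on a decomposition of $A$, the sizes of \emph{all} the lower order derivatives of $R$. List the multi-indices $\mu$ with $|\mu|\le d-1$ in order of non-decreasing $|\mu|$, say $0=\mu_1,\mu_2,\dots,\mu_M$, and attach to each a threshold $\delta_{\mu_j}>0$. Put $G_j=\{x\in A:\ |R^{(\mu_l)}(x)|<\delta_{\mu_l}\ (l<j),\ |R^{(\mu_j)}(x)|\ge\delta_{\mu_j}\}$ and $G^{\mathrm{bad}}=\{x\in A:\ |R^{(\mu_l)}(x)|<\delta_{\mu_l}\ \text{for all }l\}$, so that $A$ is the disjoint union of $G_1,\dots,G_M,G^{\mathrm{bad}}$ and, by Minkowski's inequality, $\|P\|_{L^p(A,w)}\le\sum_j\|P\|_{L^p(G_j,w)}+\|P\|_{L^p(G^{\mathrm{bad}},w)}$.

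On $G_1=\{|R|\ge\delta_0\}$ we get $\delta_0^{-1}\|PR\|_{L^p(S,w)}$ as above. On $G_j$ with $j\ge2$ we have $|R^{(\mu_j)}|\ge\delta_{\mu_j}$ and, by the ordering, $|R^{(\nu)}|<\delta_\nu$ on $G_j$ for every $\nu$ with $|\nu|<|\mu_j|$; from $|P|\le\delta_{\mu_j}^{-1}|PR^{(\mu_j)}|$, the Leibniz expansion $PR^{(\mu_j)}=(PR)^{(\mu_j)}-\sum_{0<\gamma\le\mu_j}\binom{\mu_j}{\gamma}P^{(\gamma)}R^{(\mu_j-\gamma)}$ (every $\mu_j-\gamma$ has $|\mu_j-\gamma|<|\mu_j|$, so each factor $R^{(\mu_j-\gamma)}$ is below its threshold on $G_j$), Minkowski's inequality, $G_j\subseteq\bar S$, and the iterate of Lemma~\ref{WMI} (which bounds $\|(PR)^{(\mu_j)}\|_{L^p(S,w)}$ by $C^{|\mu_j|}(n+k)^{2|\mu_j|}\|PR\|_{L^p(S,w)}$ and $\|P^{(\gamma)}\|_{L^p(S,w)}$ by $C^{|\gamma|}n^{2|\gamma|}\|P\|_{L^p(S,w)}$), one obtains
\[
 \|P\|_{L^p(G_j,w)}\le \delta_{\mu_j}^{-1}C^{|\mu_j|}(n+k)^{2|\mu_j|}\|PR\|_{L^p(S,w)}+\delta_{\mu_j}^{-1}\!\!\sum_{0<\gamma\le\mu_j}\!\!\binom{\mu_j}{\gamma}\delta_{\mu_j-\gamma}C^{|\gamma|}n^{2|\gamma|}\,\|P\|_{L^p(S,w)}.
\]
On $G^{\mathrm{bad}}$, where every $R^{(\nu)}$ with $|\nu|\le d-1$ is below its threshold, the same computation applied to the Leibniz expansion $PR^{(\alpha)}=(PR)^{(\alpha)}-\sum_{0<\gamma\le\alpha}\binom{\alpha}{\gamma}P^{(\gamma)}R^{(\alpha-\gamma)}$ (each $\alpha-\gamma$, $\gamma\ne0$, has $|\alpha-\gamma|\le d-1$) together with $|P|\le m^{-1}|PR^{(\alpha)}|$ gives
\[
 \|P\|_{L^p(G^{\mathrm{bad}},w)}\le m^{-1}C^{d}(n+k)^{2d}\|PR\|_{L^p(S,w)}+m^{-1}\!\!\sum_{0<\gamma\le\alpha}\!\!\binom{\alpha}{\gamma}\delta_{\alpha-\gamma}C^{|\gamma|}n^{2|\gamma|}\,\|P\|_{L^p(S,w)}.
\]

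It remains to add up these $M+1$ estimates and choose the thresholds. Up to constants depending only on $d$, the coefficient of $\|P\|_{L^p(S,w)}$ is a sum of quantities of the type $(\delta_\nu/\delta_\mu)\,n^{2(|\mu|-|\nu|)}$ with $|\nu|<|\mu|$, and $m^{-1}\delta_\nu\,n^{2(d-|\nu|)}$. The natural remedy is to make the thresholds decay with the order of the derivative — dividing $\delta$ by about $\epsilon n^{-2}$ each time the order drops by one, with the top thresholds ($|\nu|=d-1$) of size $\asymp m\epsilon n^{-2}$ — so that all these quantities become $O(\epsilon)$; absorbing the resulting $d$-dependent constant by a rescaling of $\epsilon$ then makes the total coefficient of $\|P\|_{L^p(S,w)}$ at most $\epsilon$, while reading off the coefficient of $\|PR\|_{L^p(S,w)}$ (using $n\le n+k$ and $|\mu_j|\le d-1$) brings it to the required shape $C_d\,m^{-1}\epsilon^{-1}(n+k)^{2d}$, with $C_d$ depending only on $d$ (and on $S,w,p$). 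This final calibration — arranging the finitely many $\delta_{\mu_j}$ to decay fast enough to kill the Markov factors coming from the differentiated copies of $P$, yet not so fast that the factors $\delta_{\mu_j}^{-1}$ spoil the allowed powers of $n+k$ and of $\epsilon^{-1}$ — is the only genuine difficulty, and is where the weights attached to the successive derivatives of $R$ must be played off against one another.
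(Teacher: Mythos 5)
Your argument is essentially the paper's proof with the induction unrolled: the paper likewise combines Leibniz's rule, a threshold set $B_0$ on which all lower-order derivatives of $R$ are small, a partition of $A\setminus B_0$ according to which derivative exceeds its threshold, and the iterated Markov inequality of Lemma~\ref{WMI}, so your one-shot stratification $G_1,\dots,G_M,G^{\mathrm{bad}}$ is just that induction run to the bottom. One caveat: with your thresholds $\delta_\nu\asymp m(\epsilon n^{-2})^{d-|\nu|}$ the $G_1$ piece contributes $\delta_0^{-1}\asymp m^{-1}\epsilon^{-d}n^{2d}$, so the calibration actually delivers $\epsilon^{-d}$ rather than the $\epsilon^{-1}$ you claim (the cross terms $\delta_\nu/\delta_\mu$ force this geometric decay); but the paper's own induction concludes with the very same $\epsilon^{-d_0}$ factor despite the lemma's statement, and only a fixed $\epsilon$ is used in the application, so this is a shared, harmless inaccuracy rather than a gap in your approach.
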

\begin{proof}
  The idea of the proof comes from \cite{G2}. Thus we proceed by induction on the length of $\alpha$. If $\alpha_1=\alpha_2=0$, then
    \begin{align*}
      |P(x)| \leq m^{-1}|P(x)R(x)| \quad \text{for} \quad x \in A.
    \end{align*}
  Therefore
     \begin{align*}
       \|P\|_{L^p(A,w)} \leq m^{-1} \|PR\|_{L^p(A,w)} \leq m^{-1}  \|PR\|_{L^p(S,w)} + \epsilon \|P\|_{L^p(S,w)}.
     \end{align*}
  Now, for $d_0 \leq d$, assume that (\ref{wSchur}) holds when $\alpha_1 + \alpha_2 < d_0$. We shall show that
  (\ref{wSchur})  is still valid for $\alpha$ such that $|\alpha|=d_0$. Here $|\alpha|$  denotes the  length of $\alpha$. Let
     \begin{align*}
       I=\left\{(\beta_1,\beta_2) \in \mathbb{N}^2 \colon 0< |\beta|, \, 0 \leq \beta_1 \leq \alpha_1, \,  0 \leq \beta_2 \leq \alpha_2\right\}.
     \end{align*}
  Notice that the set $I$  contains at most $\frac{(d_0+1)(d_0+2)}{2} -1$ elements. By Leibniz’s rule, if $x \in A$, then
     \begin{align*}
       |P(x)| \leq m^{-1} \left[ |(PR)^{(\alpha)}(x)| + \sum_{\beta \in I} \binom{\alpha}{\beta} |R^{(\alpha - \beta)}(x)|\,  |P^{(\beta)}(x)| \right].
     \end{align*}
  Let $C$ be a constant so that (\ref{wMI}) holds. We set 
     \begin{align*}
      B_0=\{ x \in A : |R^{(\alpha - \beta)}(x)| \leq  \frac{m \epsilon}{ \eta^{2}} {{\alpha}\choose{\beta}}^{-1}  (C n^2)^{-|\beta|}, \, \beta \in I \},
     \end{align*}
  where $\eta=\frac{(d_0+1)(d_0+2)}{2}$. Then, for each $x\in B_0$, we have
     \begin{align*}
       |P(x)| \leq m^{-1} |(PR)^{(\alpha)}(x)| +  \frac{ \epsilon}{ \eta^{2}} \sum_{\beta \in I} (Cn^2)^{-|\beta|} |P^{(\beta)}(x)|.
     \end{align*}
  This yields
  \begin{align*}
     \|P\|_{L(B_0,w)} &\leq  m^{-1} \|(PR)^{(\alpha)}\|_{L(B_0,w)} + \frac{ \epsilon}{\eta^2} \sum_{\beta \in I} (Cn^2)^{-|\beta|} \|P^{(\beta)}\|_{L(B_0,w)} \\ &\leq m^{-1} \|(PR)^{(\alpha)}\|_{L(S,w)} + \frac{ \epsilon}{\eta^2} \sum_{\beta \in I} (Cn^2)^{-|\beta|} \|P^{(\beta)}\|_{L(S,w)}.
  \end{align*}
  Therefore by the preceding lemma,
    \begin{align*}
      {\|P\|}_{L^p(B_0,w)} \leq  m^{-1} C^{|\alpha|} (n+k)^{2|\alpha|} {\|PR\|}_{L^p(S,w)} + \frac{\epsilon}{\eta} {\|P\|}_{L^p(S,w)}.
    \end{align*}
  On the other hand, if $x \in A\setminus B_0$, then there exists $\beta \in I$ such that
    \begin{align}\label{rineq}
      |R^{(\alpha - \beta)}(x)| > {{\alpha}\choose{\beta}}^{-1}  \frac{ m \epsilon (Cn^2)^{-|\beta|} }{\eta^2}.
    \end{align}
  Thus we can share $x \in A\setminus B_0$  into at most $\eta-1$ disjoint subsets $B_j$ such that, for every $x \in B_j$, there exists an index $\beta$ for which (\ref{rineq}) holds. Therefore, since $|\beta|>0$, on each $B_j$, replacing $\epsilon$ by $\frac{\epsilon}{\eta}$, we conclude by induction that
    \begin{align*}
      \|P\|_{L(B_j,w)} \leq (Cn^2)^{|\beta|} & \frac{\eta^2}{ m \epsilon}  {\binom{\alpha}{\beta}} C_{d_0-1} \left(\frac{\eta}{ \epsilon}\right)^{d_0-1} \\ \times & (n+k)^{2(d_0-|\beta|)} \|PR\|_{L(S,w)} + \frac{\epsilon}{\eta} \|P\|_{L(S,w)}.
    \end{align*}
  Since $A=\bigcup_{j} B_j$ we see that
    \begin{align*}
       \|P\|_{L^p(A,w)} \leq C_{d_0} m^{-1} \epsilon^{-d_0} (n+k)^{2d_0} \|PR\|_{L^p(S,w)} + \epsilon \|P\|_{L^p(S,w)}
    \end{align*}
  with
     \begin{align*}
       C_{d_0}= C^{2d_0} + \left(\frac{(d_0+1)(d_0+2)}{2 }\right)^{d_0+1} C_{d_0-1} \sum_{\beta \in I} \binom{\alpha}{\beta} C^{|\beta|},
     \end{align*}
  which completes the induction and the proof.
\end{proof}
\section{Main result}
Our main result reads as follows:
   \begin{thm}
     Let $p \geq 1$. Then there exists constant $C=C(\Omega,p)$ such that for every polynomial $P \in \mathcal{P}_n(\mathbb{R}^2)$ we have
         \begin{align}
           \max\left\{  \left\| \frac{\partial P}{\partial x} \right\|_{L^p(\Omega)}, \left\| \frac{\partial P}{\partial y} \right\|_{L^p(\Omega)} \right\} \leq C n^4  \left\| P \right\|_{L^p(\Omega)} . \label{n=4}
         \end{align}
   \end{thm}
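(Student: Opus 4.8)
The plan is to pull the inequality back from the cuspidal domain $\Omega$ to the simplex $S$ of Section~2 by the substitution that underlies Koornwinder's polynomials. Put $\pi(x_1,x_2)=(x_1+x_2,\,x_1x_2)$. A direct check shows $\pi$ is a diffeomorphism of $S$ onto $\Omega$: given $(x,y)\in\Omega$, the numbers $\tfrac12\big(x\pm\sqrt{x^2-4y}\big)$ are real and distinct (because $x^2>4y$), have sum $x$ and product $y$, and the constraints $x<y+1$ and $-x<y+1$ are exactly what places the larger of them below $1$ and the smaller above $-1$; moreover $\pi$ is injective on $S$ since these two numbers are the roots of $t^2-xt+y$. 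Since the Jacobian matrix of $\pi$ has determinant $x_1-x_2$, the change-of-variables formula gives, for $1\le p<\infty$ and any measurable $g$ on $\Omega$,
\[
  \|g\|_{L^p(\Omega)}=\Big(\int_S |g\circ\pi|^p\,(x_2-x_1)\,dx_1dx_2\Big)^{1/p}=\|g\circ\pi\|_{L^p(S,w)} .
\]
Applying this to $g=P$ and to $g=\partial P/\partial x,\ \partial P/\partial y$ reduces (\ref{n=4}) to a weighted estimate on $S$ for the polynomials $(\partial P/\partial x)\circ\pi$ and $(\partial P/\partial y)\circ\pi$ against $P\circ\pi$.

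Set $Q=P\circ\pi$. Substituting $y=x_1x_2$ doubles degrees, so $Q\in\mathcal P_{2n}(\mathbb R^2)$, and $Q$ is symmetric in $x_1,x_2$. Write $G=(\partial P/\partial x)\circ\pi$ and $H=(\partial P/\partial y)\circ\pi$; these are polynomials (compositions of polynomials) of degree $\le 2n-2$. The chain rule gives $\partial Q/\partial x_1=G+x_2H$ and $\partial Q/\partial x_2=G+x_1H$, hence the polynomial identities
\[
  (x_2-x_1)H=\frac{\partial Q}{\partial x_1}-\frac{\partial Q}{\partial x_2},\qquad
  (x_1-x_2)G=x_1\frac{\partial Q}{\partial x_1}-x_2\frac{\partial Q}{\partial x_2}.
\]
Because $|x_1|,|x_2|<1$ on $S$, each of $\|(x_1-x_2)G\|_{L^p(S,w)}$ and $\|(x_1-x_2)H\|_{L^p(S,w)}$ is therefore $\le \|\partial Q/\partial x_1\|_{L^p(S,w)}+\|\partial Q/\partial x_2\|_{L^p(S,w)}$, and Lemma~\ref{WMI} applied to $Q\in\mathcal P_{2n}(\mathbb R^2)$ bounds this by $C(2n)^2\|Q\|_{L^p(S,w)}$.

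It remains to ``divide by $x_1-x_2$'' inside the norm $\|\cdot\|_{L^p(S,w)}$, and this is precisely what Lemma~\ref{WSchur} delivers. Take $R(x_1,x_2)=x_1-x_2\in\mathcal P_1(\mathbb R^2)$, $A=S$, $d=1$ and $m=1$ (with $\alpha=(1,0)$, so $R^{(\alpha)}\equiv 1$ on $\bar S$), apply the lemma with the polynomial there equal to $G$ (resp.\ $H$), of degree $\le 2n$, and choose $\epsilon=\tfrac12$; absorbing the term $\tfrac12\|G\|_{L^p(S,w)}$ on the left yields
\[
  \|G\|_{L^p(S,w)}\le C n^2\,\|(x_1-x_2)G\|_{L^p(S,w)},
\]
and likewise for $H$. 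Chaining this with the bound of the previous paragraph gives $\|G\|_{L^p(S,w)}+\|H\|_{L^p(S,w)}\le C n^4\|Q\|_{L^p(S,w)}$, and translating back through the change of variables yields (\ref{n=4}). (For $n=0$ the inequality is trivial, both derivatives being zero.)

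The substantive ingredients — the Koornwinder substitution, the observation that the chain rule produces expressions divisible by $x_1-x_2$, and the weighted Schur inequality used to undo that division — are all in place once Lemmas~\ref{WMI} and \ref{WSchur} are granted; the only real care needed is the elementary verification that $\pi$ maps $S$ onto $\Omega$ (the boundary inequalities above) and the consistent bookkeeping of the doubled degree $2n$ and of the weight $w=x_2-x_1$ through the chain-rule identities. I therefore expect no genuine obstacle in the main argument itself.
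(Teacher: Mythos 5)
Your proposal is correct and follows essentially the same route as the paper: the substitution $x=x_1+x_2$, $y=x_1x_2$ onto the simplex $S$ with weight $x_2-x_1$, the chain-rule identities producing expressions divisible by $x_2-x_1$, Lemma~\ref{WMI} for the weighted Markov bound on $Q=P\circ\pi$, and Lemma~\ref{WSchur} with $R=x_1-x_2$ to remove the factor. The only cosmetic difference is for $\partial P/\partial x$, where you bound $x_1\partial Q/\partial x_1-x_2\partial Q/\partial x_2$ directly using $|x_i|<1$ on $S$, while the paper writes the same quantity as $\partial(x_2Q)/\partial x_2-\partial(x_1Q)/\partial x_1$ and applies the Markov lemma to $x_1Q$ and $x_2Q$.
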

   \begin{proof}
   Let us first prove the inequality (\ref{n=4}) with respect to the second variable. Let $P \in \mathcal{P}_n(\mathbb{R}^2)$. Then the integrals
     \begin{align*}
       \int_{\Omega} \left| \frac{\partial P}{\partial y} (x,y) \right|^p \, dx dy, \quad \int_{\Omega} \left| P (x,y) \right|^p \, dx dy
     \end{align*}
   become, under a change of variables $x = u + v$, $y = uv$,
     \begin{align*}
       \int_{S} \left| \frac{\partial P}{\partial y} (u+v,uv) \right|^p (v-u) \, du dv, \quad \int_{S} \left| P (u+v,uv) \right|^p (v-u) \, du dv
     \end{align*}
   where $S=\{(u,v) \in \mathbb{R}^2 \colon -1<u<v<1\}$. Let us define polynomial $Q(u,v)=P(u+v,uv)$. Then
     \begin{align*}
       (v-u) \frac{\partial P}{\partial y} (u+v,uv)  =  \frac{\partial Q}{\partial u} (u,v) -  \frac{\partial Q}{\partial v} (u,v).
     \end{align*}
   We now see, using Lemma \ref{WMI}, that
     \begin{align*}
      \left\| (v-u) \frac{\partial P}{\partial y} (u+v,uv)  \right\|_{L^p(S,w)} = \left\|\frac{\partial Q}{\partial u} -  \frac{\partial Q}{\partial v} \right\|_{L^p(S,w)} \leq C(p,S) (2n)^2  \left\| Q \right\|_{L^p(S,w)}.
     \end{align*}
   Lemma \ref{WSchur} tells us that
     \begin{align*}
       \left\|  \frac{\partial P}{\partial y} (u+v,uv)  \right\|_{L^p(S,w)}   \leq C_1(p,S) (2n)^2  \left\| (v-u) \frac{\partial P}{\partial y} (u+v,uv)  \right\|_{L^p(S,w)}.
     \end{align*}
   Hence
     \begin{align*}
       \left\|  \frac{\partial P}{\partial y} (u+v,uv)  \right\|_{L^p(S,w)}   \leq  C C_1 (2n)^4  \left\| Q \right\|_{L^p(S,w)}.
     \end{align*}
   That completes the proof of (\ref{n=4}) for the derivative of $P$ with respect to $y$.  To prove the remaining part we need to consider the polynomials $uQ$ and $vQ$. Then
      \begin{align*}
         (v-u) \frac{\partial P}{\partial x} (u+v,uv)= \frac{\partial vQ}{\partial v} (u,v) -  \frac{\partial uQ}{\partial u} (u,v).
      \end{align*}
   Hence
      \begin{align*}
      \left\| (v-u) \frac{\partial P}{\partial x} (u+v,uv)  \right\|_{L^p(S,w)}  &\leq C (2n+1)^2 \left( \left\| vQ \right\|_{L^p(S,w)} + \left\| uQ \right\|_{L^p(S,w)}\right) \\ &\leq C' (2n+1)^2   \left\| Q \right\|_{L^p(S,w)}.
     \end{align*}
   Thus using an argument similar to the one that we carry out in detail in the previous case, one can obtain the desired estimate.
   \end{proof}
\begin{rem}
  In the same fashion, we may prove that there exists a positive constant $C_l$ such that for every $P \in \mathcal{P}_n(\mathbb{R}^2)$ we have
     \begin{align}
       \max\left\{  \left\| \frac{\partial P}{\partial x} \right\|_{L^p(\Delta_l)}, \left\| \frac{\partial P}{\partial y} \right\|_{L^p(\Delta_l)} \right\} \leq C_l n^{2l}  \left\| P \right\|_{L^p(\Delta_l)} \label{2l}
     \end{align}
where $\Delta_l=\left\{ (x,y) \in \mathbb{R}^2 \colon |x|^{1/l} + |y|^{1/l} \leq 1\right\}$ and $l$ is a positive odd number.
\end{rem}
\section{Sharpness of the exponents}
In fact, according to \cite{BK}, it is enough to prove sharpness in the supremum norm.
The discussion here is based on unpublished work of M. Baran.  Let us consider following sequence of polynomials
    \begin{align*}
      P_k(x,y)=\left[ \frac{1}{k} T_k' \left(  \frac{2-x}{4} \right) \right]^5 \left( \frac{1+x+y}{4}\right)
    \end{align*}
where $T_k$ is the $k$th Chebyshev polynomial of the first kind. Note that the degree of a polynomial $P_k$ is equal $5k-4$. Since
    \begin{align*}
      \left|  \frac{1}{k} T_k' (1-x) \right| \leq \frac{1}{\sqrt{x}} \quad \text{for every} \quad x \in (0,1] \quad \text{and} \\ \frac{1+x+y}{4} \leq \left( \frac{1}{2} + \frac{x}{4} \right)^2 \quad \text{for} \quad (x,y) \in \Omega,
    \end{align*}
we may conclude that
    \begin{align*}
      \left| P_k(x,y) \right| \leq \left| \frac{1}{k} T_k' \left(  \frac{2-x}{4} \right) \sqrt{\frac{1}{2} + \frac{x}{4}} \right|^4 \left| \frac{1}{k} T_k' \left(  \frac{2-x}{4} \right) \right| \leq \frac{1}{k} \|T_k'\|_{[-1,1]}=k
    \end{align*}
for any $(x,y) \in \Omega$. On the other hand,
    \begin{align*}
      \left| \frac{\partial P_k}{\partial y} (-2,1) \right| = \frac{1}{4} \left| \frac{1}{k} T_k'(1) \right|^5=\frac{k^5}{4} \geq \frac{k^4}{4} \left\|P_k\right\|_{\Omega}.
    \end{align*}
A similar calculation shows that, for $Q_k=\left[ \frac{1}{k} T_k' \left(  \frac{1+y}{2}  \right) \right]^5 \left( \frac{x^2}{4} -y \right) $,
    \begin{align*}
      \left\| Q_k \right\|_{\Omega} \leq k \quad and \quad \left| \frac{\partial Q_k}{\partial x} (2,1) \right|=k^5.
    \end{align*}

Let $P_n^{(\alpha,\beta)}$ denote the Jacobi polynomials. In order to prove sharpness of (\ref{2l}), we consider the sequence of polynomials $W_n(x,y)=yP_n^{(\alpha,\alpha)}(x)$. Thus
 \begin{align*}
    &\int_{\Delta_l} \left| \frac{\partial W_n}{\partial y} (x,y) \right|^p \, dx dy = 2\int_{-1}^{1} \left| P_n^{(\alpha,\alpha)} (x) \right|^p \left(1-|x|^{1/l}\right)^l\, dx, \\
    &\int_{\Delta_l} \left| W_n (x,y) \right|^p \, dx dy = \frac{2}{p+1} \int_{-1}^{1} \left| P_n^{(\alpha,\alpha)} (x) \right|^p \left(1-|x|^{1/l}\right)^{(p+1)l}\, dx.
 \end{align*}
By the well known symmetry relation (see \cite{Szego}, Chap. IV)
  \begin{align*}
    P_n^{(\alpha,\beta)}(x)=(-1)^n P_n^{(\beta,\alpha)}(-x).
  \end{align*}
we find that
    \begin{align*}
    &\int_{\Delta_l} \left| \frac{\partial W_n}{\partial y} (x,y) \right|^p \, dx dy = 4\int_{0}^{1} \left|  P_n^{(\alpha,\alpha)}(x) \right|^p \left(1-x^{1/l}\right)^l \, dx, \\
    &\int_{\Delta_l} \left|  W_n (x,y) \right|^p \, dx dy  = \frac{4}{p+1}\int_{0}^{1} \left|  P_n^{(\alpha,\alpha)}(x) \right|^p \left(1-x^{1/l}\right)^{(p+1)l} \, dx.
    \end{align*}
Now Bernoulli's inequality, for each positive integer $l$ and $x\in [0,1]$, implies that
    \begin{align*}
     \left(  \frac{1-x}{l} \right)^{l} \leq \left(1-x^{1/l}\right)^l \leq (1-x)^l.
    \end{align*}
Hence, if $n \rightarrow \infty$, then
    \begin{align*}
        \frac{ \int_{\Delta_l} \left| \frac{\partial W_n}{\partial y} (x,y) \right|^p \, dx dy   }{ \int_{\Delta_l} \left|  W_n (x,y) \right|^p \, dx dy   } \sim \frac{ \int_{0}^{1} \left|  P_n^{(\alpha,\alpha)}(x) \right|^p \left(1-x\right)^l \, dx }{ \int_{0}^{1} \left|  P_n^{(\alpha,\alpha)}(x) \right|^p \left(1-x\right)^{(p+1)l} \, dx  }.
    \end{align*}
We may now apply the result of Szeg\"{o} (see \cite{Szego}, Chap. VII) to get
    \begin{align}
      & \int_{0}^{1} \left|  P_n^{(\alpha,\alpha)}(x) \right|^p \left(1-x\right)^l \, dx  \sim n^{\alpha p-2l-2} \quad \text{whenever} \quad 2l<\mu_{\alpha,p}, \label{s1} \\ &\int_{0}^{1} \left|  P_n^{(\alpha,\alpha)}(x) \right|^p \left(1-x\right)^{(p+1)l} \, dx  \sim n^{\alpha p-2(p+1)l-2},  \quad 2(p+1)l<\mu_{\alpha,p}, \label{s2}
      \end{align}
where $\mu_{\alpha,p}=\alpha p -2 + p/2$. If $2(p+1)l<\mu_{\alpha,p}$, we can combine (\ref{s1}) and (\ref{s2}) to see that
    \begin{align*}
     \frac{ \left\| \frac{\partial W_n}{\partial y} \right\|_{L^p(\Delta_l)} } {\left\| W_n\right\|_{L^p(\Delta_l)}} \sim n^{2l}.
    \end{align*}
That is what we wished to prove.
\section*{Acknowledgment}
The author deeply thanks Mirosław Baran and Leokadia Białas-Cież who pointed out some important remarks, corrections and shared their unpublished notes. \\
The author was supported by the Polish National Science Centre (NCN) Opus grant no. 2017/25/B/ST1/00906.






\small{
}

\end{document}